\documentclass[12pt, one side]{article}
\usepackage[english]{babel}
\usepackage[T1]{fontenc}

\usepackage[centertags]{amsmath}
\usepackage{amssymb}
\usepackage{amsthm}
\usepackage{newlfont}
\usepackage{amsfonts}
\theoremstyle{plain}

\newtheorem{theorem}{Theorem}[section]

\newtheorem{lemma}{Lemma}[section]

\begin{document}

\begin{center}
\textbf{\LARGE On The $b$-Chromatic Number \\\vspace{5pt}
 of  Regular Bounded Graphs }
\end{center}

 \begin{center}
{El Sahili Amine /  Kouider Mekkia }
\end{center}
\begin{center}{   Mortada Maidoun}
  \end{center}

 \indent        \centerline{\textbf{ Abstract}}\vspace{08pt}
 A $b$-coloring of a graph is a proper coloring such that every color
 class contains a vertex adjacent to at least one vertex in each of the
 other color classes.  The $b$-chromatic number of a graph $G$, denoted
by $b(G)$, is the maximum integer $k$ such that $G$ admits a
$b$-coloring with $k$ colors. El Sahili and Kouider conjectured that
$b(G)=d+1$ for $d$-regular graph with girth 5, $d\geq4$. In this
paper, we prove that this conjecture holds for $d$-regular graph
with at least $d^3+d$ vertices. More precisely we show that
$b(G)=d+$1 for $d$-regular graph with at least $d^3+d$ vertices and
containing no cycle of order 4. We also prove that $b(G)=d+1$ for
$d$-regular graphs with at least $2d^3+2d-2d^2$ vertices improving
 Cabello and Jakovac bound.
\\

\textbf{Keywords}: proper coloring, $b$-coloring, $b$-chromatic
number.

\section{Introduction}
  A proper coloring of a graph $G = (V;E)$ is an assignment of colors to the
vertices of G, such that any two adjacent vertices have different
colors. The chromatic number of $G$, denoted by $\chi(G)$ is the
smallest integer $k$ such that $G$ has a proper coloring with $k$
colors. A color class in a proper coloring of a graph $G$ is the
subset of $V$ containing all the vertices of same color. A proper
coloring of a graph is called a $b$-coloring, if  each color class
contains a vertex  adjacent to at least one vertex of each of the
other color classes. Such a vertex  is called a dominant vertex. The
$b$-chromatic number of a graph $G$, denoted by $b(G)$,  is the
largest integer $k$ such that $G$ has a $b$-coloring with $k$
colors.
 For a given graph G, it may be easily remarked that
 $ \chi(G) \leq b(G)\leq \Delta(G) + 1$.\newline

 \indent The $b$-chromatic
number of a graph was introduced by Irving and Manlove $[10]$  when
considering minimal proper colorings with respect to a partial order
defined on the set of all partitions of the vertices of a graph.
They proved that determining  $b(G)$
 is NP-hard for general graphs, but polynomial-time solvable for
trees. \newline

Recently, Kratochvil \emph{et al}.$[11]$   have shown that
determining $b(G)$ is NP-hard even for bipartite graphs while
Corteel, Valencia-Pabon, and Vera $[5] $ proved that there is no
constant $\epsilon > 0$ for which the $b$-chromatic number can be
approximated within a factor of 120/133-$\epsilon$ in polynomial
time $(unless P=NP)$.\newline

Finally, Balakrishnan and Francis Raj $[1,2]$  investigated the
b-chromatic number of the Mycielskians and vertex deleted subgraphs.
Hoang and Kouider $[9]$ characterize all bipartite graphs $G$ and
all $P_4$-sparse graphs $G$ such that each induced subgraph $H$ of
$G$ satisfies $b(H) = \chi(H)$. In $[8]$, Effantin and Kheddouci
gave the exact value for the b-chromatic number of power graphs of a
path and  determined bounds for the b-chromatic number of
power graphs of a cycle.\\
\\
\text In $[6]$, Kouider and El-Sahili formulated the following
conjecture: For a $d$-regular graph with girth 5, $b(G)=d+1$ for
$d\geq 4$. They proved that for every graph $G$ with girth at least
$6$, $b(G)$ is at least the minimum degree of the graph, and if this
graph is $d$-regular then $b(G) = d + 1$.
 This conjecture have been proved in $[6]$ for
the particular case when $G$ contains no cycles of order 6. In
$[3]$, Maffray \emph{et.al} proved that the conjecture holds for
d-regular graphs different from the Petersen graph and $d \leq  6$.
The $f$-chromatic vertex number of a $d$-regular graph $G$, denoted
by $f(G)$, is the maximum number of dominant vertices with distinct
color classes in a proper $d + 1$-coloring of $G$. In $[7]$, El
Sahili \emph{et al}. proved that $f(G)\leq b(G)$ and then
reformulated El Sahili and Kouider conjecture as follows: $f(G)=d+1$
for $d$-regular graph with no cycle of order 4. They proved that (i)
$f(G)\geq \lfloor\frac{d+1}{2}\rfloor+2$ for a $d$-regular graph
containing no cycle of order 4; (ii) $f(G) \geq \lceil \frac{d+1}
{2}\rceil $ + 4 for $d$-regular graph  containing no cycle of order
4 and of diameter 5; (iii) $b(G) = d + 1$ for a $d$-regular graph
containing no cycle of order 4 nor of order 6; (v) $b(G) = d + 1$
for a $d$-regular graph with no cycle of order 4 and of diameter at
least 6. In this paper, we prove in two different methods that El
Sahili and Kouider conjecture holds for a $d$-regular graph
containing at least $d^3+d$ vertices and more precisely we prove
that $b(G)=d+1$ for a $d$-regular graph containing no cycle of order 4 and at least $d^3+d$ vertices.\\
\\
In $[11]$, Kratochvil \emph{et al.} proved  that for a $d$-regular
graph $G$ with at least $d^4$ vertices, $b(G) = d+1$. It follows
from their result that for any $d$, there is only a finite number of
$d$-regular graphs $G$ with $b(G) \leq d$. In $[4]$, using
matchings, Cabello and Jakovac reduced the bound of $d^4$ vertices
to $2d^3-d^2+d$ vertices. In this paper, we  show that, without
using  matching, $b(G)=d+$1 for a $d$-regular graphs with $v(G)\geq
2d^3+2d-2d^2$ improving Cabello and Jakovac bound.
\\
\\
\section{Lower Bounded Graphs }
 Consider a $d$-regular graph $G$ and let $K$ and
$F$ be 2 disjoint and fixed induced subgraphs of $G$. Suppose that
the vertices of $K$ are colored by a proper $d+1$-coloring. Also,
suppose that the vertices of $F$ are colored by a proper
$d+1$-coloring $c$. We define a   digraph $\Delta_c$ where
$V(\Delta_c)=\{1,2,...,d+1\}$ and $E(\Delta_c)=\{(i,j)\;:\;$a vertex
of color $i$ in $F$ is not adjacent to a vertex of color $j$ in
$K$\}. $\Delta_c$ is called a coloring digraph. Note that the
coloring digraph $\Delta_c$ may contain loops and circuits of length
2. The number of loops in $\Delta_c$ is denoted by $\ell(\Delta_c)$.
We introduce the following lemma:
\begin{lemma}
If $(i,i)\notin E(\Delta_c)$ and if there exists a circuit $C$ in
$\Delta_c$ containing $i$, then we can recolor $V(F)$ by a proper
$d+1$-coloring $c'$ such that $\ell(\Delta_{c'})> \ell(\Delta_c)$.
\end{lemma}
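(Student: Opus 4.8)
The plan is to produce $c'$ simply by rotating the colours that occur along the circuit. Since $i$ lies on a circuit of $\Delta_c$ but, by hypothesis, carries no loop, that circuit has length at least $2$; replacing it by a shortest closed directed walk through $i$ if necessary, I may assume it is a directed cycle $C=(c_0,c_1,\dots,c_{m-1},c_0)$ with $c_0=i$, $m\ge 2$, and $c_0,\dots,c_{m-1}$ pairwise distinct. For each $t$ (indices read modulo $m$) the arc $(c_t,c_{t+1})$ of $C$ lying in $E(\Delta_c)$ means that $F$ contains a vertex of colour $c_t$ having no neighbour of colour $c_{t+1}$ in $K$; fix such a witness $u_t$.

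Next I would let $\pi$ be the permutation of the colour set $\{1,\dots,d+1\}$ that cyclically shifts $c_0\mapsto c_1\mapsto\cdots\mapsto c_{m-1}\mapsto c_0$ and fixes every other colour, and define $c'$ on $V(F)$ by $c'=\pi\circ c$, keeping the colouring of $K$ unchanged. Because $c'$ is obtained from the proper colouring $c$ by a uniform renaming of colours, it is automatically a proper $(d+1)$-colouring of $F$; this is the step that makes the construction effortless, and the only care required is to apply $\pi$ to \emph{every} vertex of $F$ whose colour lies on $C$, not merely to the chosen witnesses, so that $c'$ is a genuine colour permutation rather than a partial recolouring.

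It then remains to compare the loop counts, which is where the actual content lies. A colour $k$ carries a loop in $\Delta_{c'}$ precisely when some vertex of $F$ has $c'$-colour $k$ — equivalently, $c$-colour $\pi^{-1}(k)$ — and no neighbour of colour $k$ in $K$. If $k=c_s$ lies on $C$ then this says exactly that $(c_{s-1},c_s)\in E(\Delta_c)$, which holds because it is an arc of $C$ (and $u_{s-1}$ witnesses it); hence \emph{every} colour of $C$, in particular $i$, carries a loop in $\Delta_{c'}$. If $k$ does not lie on $C$ then $\pi^{-1}(k)=k$, so the colour-$k$ class of $F$ is untouched and $k$ carries a loop in $\Delta_{c'}$ if and only if it does in $\Delta_c$. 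Thus the loop set of $\Delta_{c'}$ is $\{c_0,\dots,c_{m-1}\}$ together with those loops of $\Delta_c$ avoiding $C$, whereas the loop set of $\Delta_c$ is a subset of $\{c_1,\dots,c_{m-1}\}$ (it omits $c_0=i$) together with the same loops avoiding $C$. Writing $L$ for the number of loops of $\Delta_c$ that avoid $C$, we get $\ell(\Delta_{c'})\ge m+L$ while $\ell(\Delta_c)\le (m-1)+L$, whence $\ell(\Delta_{c'})\ge \ell(\Delta_c)+1$, the desired strict inequality. The one genuinely delicate point is this bookkeeping — checking that passing from $c$ to $c'$ creates the loop at $i$ while destroying no loop whatsoever — and it is exactly the colour-permutation structure of $\pi$, together with the reduction to a simple directed cycle through $i$ that makes $\pi$ well defined, that secures it.
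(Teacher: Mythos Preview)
Your proof is correct and follows essentially the same approach as the paper's: recolour $F$ by cyclically shifting the colours along the circuit, then observe that every colour on the circuit acquires a loop in $\Delta_{c'}$ (witnessed by the arcs of $C$) while loops on colours off the circuit are preserved. Your write-up is slightly more careful---you explicitly reduce to a simple directed cycle so that the permutation $\pi$ is well defined and you spell out the loop bookkeeping---but the underlying idea is identical to the paper's.
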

\begin{proof}
Suppose that $(i,i)\notin E(\Delta_c)$ and there exists a circuit
$C$ in $\Delta_c$ containing $i$. Without loss of generality,
suppose that $C=1\;2\;...\;i$. We define a new proper coloring $c'$,
where for $v\in F$ \begin{center}
 $c'(v) = \begin{cases}
  c(v) & \text{if $c(v)\notin C$} \\
  c(v)+1 & \text{if $c(v)\in C\backslash\{i\}$} \\
  1 & \text{if $c(v)=i$}
\end{cases}$
\end{center}
 A loop $(s,s)$ in $\Delta_c$ is clearly a loop in $\Delta_{c'}$ whenever $s\geq i+1$.   Since
 $(s,s+1)$  and $ (i,1)\in E(\Delta_c)$, $1\leq s\leq i-1$, then $(l,l)$  is a loop in
$\Delta_{c'}$, $\forall l \; 1\leq l\leq i$. Thus,
$\ell(\Delta_{c'})> \ell(\Delta_c)$.
\end{proof}

\begin{theorem}
Let $G$ be a $d$-regular graph with no cycle of order 4. If
$v(G)\geq d^3+d$, then $b(G)=d+1$
\end{theorem}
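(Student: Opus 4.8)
Since $G$ is $d$-regular, the general bound $\chi(G)\le b(G)\le\Delta(G)+1$ gives $b(G)\le d+1$, so it suffices to produce a $b$-coloring of $G$ with $d+1$ colors, that is, a proper $(d+1)$-coloring together with $d+1$ dominant vertices carrying distinct colors. The plan is to designate the future dominant vertices first. I would pick vertices $v_1,\dots,v_{d+1}$ that are pairwise at distance at least $3$. The hypothesis that $G$ has no cycle of order $4$ forces every ball of radius $2$ to contain at most $1+d+d(d-1)=d^2+1$ vertices: a vertex at distance exactly $2$ from $v$ is adjacent to at most one neighbour of $v$, for otherwise that vertex, together with $v$, would lie on a $4$-cycle. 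Hence the naive greedy selection — repeatedly choose a vertex and delete its radius-$2$ ball — keeps succeeding as long as $d$ such balls fail to cover $V(G)$, and this is guaranteed by $v(G)\ge d^3+d=d(d^2+1)$; the boundary case, in which $d$ pairwise disjoint radius-$2$ balls of size exactly $d^2+1$ would exhaust $V(G)$, can be excluded by a short separate argument, since such a graph is extremely rigid. One then sets $c(v_i)=i$, a proper partial coloring because the $v_i$ are pairwise non-adjacent.

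The core of the argument is then to colour the neighbourhoods. For each $i$ I want $N(v_i)$ to receive \emph{all} colours of $\{1,\dots,d+1\}\setminus\{i\}$, each exactly once; this is precisely what makes $v_i$ dominant, and since $|N(v_i)|=d$ it amounts to prescribing a bijection. The absence of $4$-cycles again controls how the neighbourhoods interact: the sets $N(v_i)$ are pairwise disjoint and disjoint from $\{v_1,\dots,v_{d+1}\}$ (distance at least $3$), and for $i\ne j$ the edges between $N(v_i)$ and $N(v_j)$ form a (partial) matching — if some $u\in N(v_i)$ had two neighbours in $N(v_j)$, then together with $v_j$ we would obtain a $4$-cycle. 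Consequently each vertex of $N(v_i)$ has at most one neighbour in each of the other $d$ neighbourhoods, hence at most $d-1$ neighbours outside its own core (note that edges inside a single $N(v_i)$, coming from triangles through $v_i$, cause no trouble, since $N(v_i)$ will receive pairwise distinct colours). This sparseness of the conflicts is exactly what the coloring-digraph machinery is built to exploit: starting from any proper colouring of the subgraph induced by $\bigcup_i N[v_i]$ that keeps $c(v_i)=i$ (such a colouring exists by greedy extension, as that subgraph has maximum degree at most $d$), I would, with a suitable choice of the fixed subgraphs $K$ and $F$, repeatedly invoke the preceding lemma to recolour and strictly increase $\ell(\Delta_c)$. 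Since $\ell(\Delta_c)\le d+1$, this process terminates, and the terminal configuration — in which no non-loop vertex of $\Delta_c$ lies on a circuit — is precisely one where each $v_i$ sees all $d$ colours distinct from $i$ among its neighbours.

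Once $\bigcup_i N[v_i]$ is properly coloured in this way, the rest is immediate: every remaining vertex of $G$ has exactly $d$ neighbours, hence at most $d$ coloured neighbours, so one of the $d+1$ colours is always free and the partial colouring extends greedily, in any order, to a proper $(d+1)$-colouring of $G$. This extension never touches $N[v_i]$, so $v_1,\dots,v_{d+1}$ remain dominant with distinct colors, giving $b(G)=d+1$.

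The routine ingredients are the ball count underlying the selection and the final greedy extension; the substantive content of the theorem is the simultaneous, consistent colouring of all $d+1$ neighbourhoods, and I expect that to be the main obstacle. Everything there rests on converting the matching structure produced by $C_4$-freeness into termination of the loop-increasing procedure from the preceding lemma, and on verifying that the terminal digraph genuinely certifies that every $v_i$ is dominant; an alternative, if one prefers to avoid the iteration, would be to colour the neighbourhoods one at a time and check a Hall-type condition for a system of distinct representatives at each step, again using that each vertex loses at most $d-1$ colours to already-coloured neighbourhoods.
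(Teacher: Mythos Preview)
Your proposal diverges from the paper in a way that leaves the heart of the argument unproved. The paper does not pre-select the dominant vertices; it builds them one at a time, and at each step the new vertex $y$ is chosen not merely outside $C\cup R$ (your distance-$\geq 3$ condition) but also outside three auxiliary sets $S_1,S_2,S_3$ defined in terms of how many neighbours a vertex has in the various $R_i$. Those extra constraints are precisely what the circuit argument for Lemma~2.1 consumes: they force at least three neighbours of $y$ to have at most one neighbour in $C$ (inequality $(b)$), every neighbour of $y$ to have at most $\lfloor(d+1)/2\rfloor$, and a suitable $w_5$ with at most three neighbours in $C$ to exist; from these the paper assembles a short circuit through any non-loop colour $i$. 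Under your hypothesis a vertex $u\in N(v_{k+1})$ may have one neighbour in each of $N(v_1),\dots,N(v_k)$, hence up to $d-1$ coloured neighbours, so none of those bounds are available and the circuit construction does not go through. Your Hall-type alternative fares no better: ``each vertex loses at most $d-1$ colours'' gives only $d_H(u)\geq 1$, and the $C_4$-free matching structure between $N(v_{k+1})$ and each earlier neighbourhood gives only $d_H(j)\geq 1$; Lemma~3.1 needs $d_H(u)+d_H(j)\geq d$, and nothing you have written rules out two vertices of $N(v_{k+1})$ each forbidden from the same $d-1$ colours and hence both forced to the single remaining one.

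There is also a gap in the selection step. Greedy removal of radius-$2$ balls secures only $d$ pairwise distance-$\geq 3$ vertices when $v(G)=d^3+d$, and the boundary case is not a disposable technicality: for $d=3$, three disjoint copies of the Petersen graph form a $C_4$-free $3$-regular graph on exactly $30=d^3+d$ vertices in which every component has diameter $2$, so no four vertices are pairwise at distance $\geq 3$; your scheme simply cannot produce $v_4$ there. In the paper the bound $d^3+d$ arises not as a ball-packing count but as an upper bound on $|C|+|R|+|S_1|+|S_2|+|S_3|$, and it is the $S_i$ terms---absent from your outline---that do the real work in both the counting and the subsequent colouring.
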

\begin{proof}
Suppose that $k$ vertices and their neighbors are colored by a
proper $d+1$-coloring in such a way that these $k$ vertices are
dominant of color 1,2,...,$k$, $k\leq d$. Let $C$ be the set of
colored vertices, then $|C|=k+kd\leq d(d+1)$.
\newline
Let $R_i=\{v\in R: v$ has exactly $i$ neighbors in $C$\}, $0\leq
i\leq d$, and set  $R=N(C)=\bigcup _{i=1}^{d}R_i$.\\ Let
$R_a=R_2\cup...\cup R_{\lfloor \frac{d+1}{2}\rfloor}$,
$R_b=R_4\cup...\cup R_{\lfloor \frac{d+1}{2}\rfloor}$, and
$R_c=R_{\lfloor \frac{d+1}{2}\rfloor+1}\cup...\cup R_d$.\\
  Since dominant vertices has
no neighbors in $V (G)\backslash C$ and  a neighbor of a dominant
vertex has at most $d-1$ neighbors in $R$, then  by double counting
the edges between $C$ and $(R_1\cup R_2\cup...\cup R_d)$ we can say
that:

\begin{center}$|R|+|R_a|+2|R_b|+\lfloor
\frac{d+1}{2}\rfloor|R_c|\leq d^2(d-1) ~~~~(a)$\end{center}Let
\begin{center}
 $S_1=\{v\notin C\cup R \;:\;|N(v)\cap R_a|\geq
d-2\}$\end{center}
\begin{center}$S_2=\{v\notin C\cup R\;:\;
|N(v)\cap R_b|\geq \lceil \frac{d-1}{2}\rceil\}$ \end{center}
\begin{center}
$S_3=\{v\notin C\cup R\;:\; |N(v)\cap R_c|\geq 1\}$
\end{center}

 We have \begin{center} $(d-2)|S_1|\leq (d-2)|R_a|,$ then
$|S_1|\leq|R_a|$
\end{center}
\begin{center}
$ \lceil \frac{d-1}{2}\rceil|S_2|\leq (d-4)|R_b|$, then $|S_2|<
2|R_b|$\end{center}

\begin{center}
 and $|S_3|\leq ( \lceil \frac{d-1}{2}\rceil-1)|R_c|$ \end{center} So, \begin{center}
$|C|+|R|+|S_1|+|S_2|+|S_3|< |C|+|R|+|R_a|+2|R_b|+( \lceil
\frac{d-1}{2}\rceil-1)|R_c|$\end{center} But, by $(a)$, we have  $
\lfloor \frac{d+1}{2}\rfloor |R_c|\leq d^2(d-1)-|R|-|R_a|-2|R_b|$,
thus
\begin{center} $|C|+|R|+|S_1|+|S_2|+|S_3|<
|C|+|R|+|R_a|+2|R_b|+d^2(d-1)-|R|-|R_a|-2|R_b|\leq d(d+1)+
d^2(d-1)\leq d^3+d$
 \end{center}
 Since $v(G)\geq d^3+d$, then there exists a vertex $y$ such that $y\notin C\cup
 R\cup S_1\cup S_2\cup S_3$. We note that
 \begin{center}
 $|N(y)\cap R_a|\leq
d-3$, \; $|N(y)\cap R_b|\leq \lceil \frac{d-1}{2}\rceil-1 ,\;
|N(y)\cap R_c|= 0$.
\end{center}

Thus, we have \begin{center} $|N(y)\cap(R_0\cup R_1)|\geq 3~~~~(b)$.
\end{center}
 Let $K$  and $F$ be two induced subgraphs of $G$ where
$V(K)=C$ and $V(F)=N(y)\cup\{y\}$. Color $y$ and its neighbors by a
proper $d$+1-coloring $c$ in such a way that $y$ is a dominant
vertex of color $k$+1 and $\ell(\Delta_c)$ is maximal . If
$\ell(\Delta_c)= d+1$, then $y$ is a dominant vertex in a proper
$d$+1-coloring for $V(K)\cup$ $V(F)$. Else, there exists $i\neq k+1$
such that $(i,i)\notin E(\Delta_c)$. Let $x$ be the vertex of color
$i$ in $N(y)$.

   There exists at least a neighbor of $y$,
say $w_1$, such that $w_1$ has no neighbor of color  $i$ in $K$
since $G$ has no $C_4$ and $k\leq d$. Also, by $(b)$, there exist at
least three neighbors of $y$, say $w_2, w_3$ and $w_4$,  such that
each of them has at most one neighbor  in $K$. Since $x$ has  at
most $\lfloor\frac{d+1}{2}\rfloor$  neighbors  in $K$ and $y\notin
C\cup R\cup S_1\cup S_2\cup S_3$, then there exists a neighbor of
$y$, say $w_5$, such that $x$ has no neighbor of color $c(w_5)$
$(c(w_5)\neq k+1)$ in $K$ and $w_5$ has at most 3 neighbors in $K$.
If $x$ has no neighbor of color $c(w_1)$ in
 $K$, then $C_1=i\;c(w_1)$ is a circuit in $\Delta_c$. Else, $i$ has
 a neighbor in $K$ of color $c(w_1)$. If there exists $j$, $2\leq j\leq 4$, such
 that $x$ has no neighbor of color $c(w_j)$ in $K$, then $C_2$ or $C_3$ is a circuit in
$\Delta_{c}$, where $C_2=i\;c(w_j)$ and $C_3=i\;c(w_j)\;c(w_1)$,
since $w_j$ has at most one  neighbor in $K$.  Otherwise, neither
$x$ nor  $w_5$ belongs to the set $\{w_1,w_2,w_3,w_4\}$ since $x$
has no neighbor in $K$ of color $c(w_5)$ and it has more than one
neighbor. If $w_5$ is not adjacent to a vertex of color
 $i$ in $K$, then $C_3=i\;c(w_5)$ is a circuit in $\Delta_{c}$. Otherwise, there exists $j,\; 1\leq j\leq 4$,
such that $w_5$ has no neighbor of color $c(w_j)$ in $K$ since $w_5$
has at most 3 neighbors in $K$. If j=1, then $C_4=c(w_5)\;c(w_1)\;i$
is a circuit in $\Delta_c$. Else, $C_5$ or $C_6$ is a circuit in
$\Delta_{c}$, where $C_5=c(w_5)\;c(w_j)\;i$ and
$C_6=c(w_5)\;c(w_j)\;c(w_1)\;i$, since $w_j$ has at most one
neighbor in $K$. In all cases, there exists a circuit containing
$i$. Then, by Lemma 2.1 we can find a proper $d+$1-coloring $c'$ of
$V(F)$ such that $\ell(\Delta_{c'})>\ell(\Delta_c)$, a
contradiction. Thus, $\ell(\Delta_c)=d+1$ and so $c$ is a proper
$d$+1-coloring for $V(F) \cup V(K)$ and $y$ is a dominant vertex of
color $k+1$. This proves that we can find a proper $d + 1$-coloring
of $G$ that contains $d + 1$ dominant vertices of distinct colors
\end{proof}
 \text \indent The coloring digraph, which is used to prove Theorem 2.1, can be used  also to establish
 the following result improving Cabello and Jakovac bound:

\begin{theorem}
Let $G$ be a $d$-regular graph such that $v(G)\geq2d^3+2d-2d^2$,
then $b(G)=d+1$.
\end{theorem}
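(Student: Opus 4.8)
The plan is to mimic the strategy of Theorem 2.1 but without assuming the absence of a $C_4$, which forces us to work with a larger vertex count. As before, suppose $k\le d$ vertices together with their neighbors have been colored by a proper $(d+1)$-coloring so that these $k$ vertices are dominant of colors $1,\dots,k$; let $C$ be the set of all colored vertices, so $|C|=k(d+1)\le d(d+1)$. Define $R=N(C)\setminus C$ and $R_i=\{v\in R: |N(v)\cap C|=i\}$, and split $R$ into the low-incidence part (small $i$) and high-incidence part exactly as in Theorem 2.1, together with the corresponding ``second neighborhood'' sets $S_1,S_2,S_3$. The counting inequality $(a)$, obtained by double counting edges between $C$ and $R$, still holds, and the bounds $|S_1|\le|R_a|$, $|S_2|<2|R_b|$, $|S_3|\le(\lceil\frac{d-1}{2}\rceil-1)|R_c|$ go through unchanged since they do not use the girth hypothesis. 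The difference is that, lacking the $C_4$-free assumption, we will need more ``room'': we must ensure the existence of a vertex $y\notin C\cup R\cup S_1\cup S_2\cup S_3$ that has, say, at least a constant number of neighbors outside of $R$ (an analogue of $(b)$), and moreover we will need to exclude additional bad vertices — those $y$ for which too many neighbors of the would-be conflicting vertex $x$ coincide with neighbors of $y$. Summing all these forbidden sets gives a bound of the shape $|C|+|R|+|S_1|+|S_2|+|S_3|+(\text{extra})<2d^3+2d-2d^2$, so $v(G)\ge 2d^3+2d-2d^2$ guarantees a good vertex $y$.

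Having fixed such a $y$, I would set $V(K)=C$, $V(F)=N(y)\cup\{y\}$, color $y$ and $N(y)$ by a proper $(d+1)$-coloring $c$ making $y$ dominant of color $k+1$ and with $\ell(\Delta_c)$ maximal, and run the same dichotomy: if $\ell(\Delta_c)=d+1$ we are done (one more dominant vertex), otherwise there is $i\ne k+1$ with $(i,i)\notin E(\Delta_c)$, and we exhibit a circuit of $\Delta_c$ through $i$ so that Lemma 2.1 yields a recoloring $c'$ with $\ell(\Delta_{c'})>\ell(\Delta_c)$, contradicting maximality. The circuit-construction step is where the $C_4$-free hypothesis was used in Theorem 2.1 (it guaranteed that a neighbor $w_1$ of $y$ has no neighbor of color $i$ in $K$); without it, the vertex $x$ of color $i$ in $N(y)$ may have as many as $d$ neighbors in $K$, so we cannot immediately find such a $w_1$. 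Instead we rely on the extra deleted vertices: because $y$ avoids those bad sets, only few neighbors of $y$ can be adjacent to a color-$i$ vertex of $K$ and simultaneously few can be high-incidence into $C$, leaving enough ``free'' neighbors $w_1,\dots,w_5$ of $y$ with at most one (or a bounded number of) neighbor(s) in $K$ and avoiding the relevant colors; chaining one or two of them produces a circuit $i\,c(w_j)$ or $i\,c(w_j)\,c(w_\ell)$ or similar through $i$. Iterating the whole argument from $k=0$ (or from any partial good coloring) up to $k=d$ builds a $(d+1)$-coloring of $G$ with $d+1$ dominant vertices of distinct colors, i.e. $b(G)=d+1$; and since always $b(G)\le\Delta(G)+1=d+1$, equality follows.

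The main obstacle is the combinatorial bookkeeping in the circuit-finding step once the $C_4$-free hypothesis is dropped: one must carefully quantify how many of $y$'s $d$ neighbors can be ``blocked'' — either by being adjacent to a color-$i$ vertex of $K$, or by having too many neighbors in $K$, or by sharing a second neighbor with $x$ — and verify that a surplus of free neighbors remains, then assemble these into a genuine directed circuit of $\Delta_c$ through $i$. Getting the constants to close at exactly $2d^3+2d-2d^2$ (rather than something larger) is the delicate part: it amounts to choosing the thresholds in the definitions of $R_a,R_b,R_c$ and $S_1,S_2,S_3$, and the number of reserved neighbors of $y$, so that the worst-case count of forbidden vertices is at most $2d^3+2d-2d^2-1$. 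I would expect the verification $(d-2)|S_1|\le(d-2)|R_a|$-type inequalities and the final summation to be routine once the right thresholds are identified; the genuinely new content is re-proving the analogue of property $(b)$ with enough slack and re-deriving the circuit in each of the parallel cases $C_1,\dots,C_6$ without invoking girth.
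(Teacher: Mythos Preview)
Your overall strategy matches the paper's: use the coloring digraph, choose $c$ with $\ell(\Delta_c)$ maximal, and apply Lemma~2.1 to a circuit through any non-loop $i$. The gap is that you never identify the one genuinely new ingredient that replaces the $C_4$-free hypothesis and makes the bound close at $2d^3-2d^2+2d$. In the paper this is done by introducing, for each color $i\neq k+1$, the set $C_i=\{v\in R:\ v\text{ has a neighbor of color }i\text{ in }C\}$ and the ``bad'' set $S'_i=\{v\notin C\cup R:\ |N(v)\cap C_i|\ge d-1\}$. Since at most $(d-1)$ vertices of color $i$ in $C$ have outside neighbors and each has at most $d-1$ of them, $|C_i|\le(d-1)^2$, hence $|S'_i|\le(d-1)^2$ and $|S'|=|\bigcup_i S'_i|\le d(d-1)^2$; this is exactly the extra $d(d-1)^2$ term that pushes the forbidden count from $d^3+d$ up to $2d^3-2d^2+2d$. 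Avoiding $S'$ gives the crucial property $|N(y)\cap C_i|\le d-2$ for every $i\neq k+1$, i.e.\ at least \emph{two} neighbors $w_1,w_2$ of $y$ have no color-$i$ neighbor in $K$, which is precisely the substitute for the $C_4$-free argument you were looking for.

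Two further points. First, your description of the extra bad vertices as ``those $y$ for which too many neighbors of the would-be conflicting vertex $x$ coincide with neighbors of $y$'' is off target: $x$ lies in $F$, not in $C$, and what matters is adjacency of $N(y)$ to colored vertices of color $i$ in $C$, not to $x$. Second, once the $S'$ sets are in hand the circuit construction becomes \emph{shorter} than in Theorem~2.1, not longer: with two free neighbors $w_1,w_2$ already guaranteed, you only need a single additional $w_3$ (with at most two colored neighbors, coming from the low-incidence side) to close a circuit of length at most $3$. The five-vertex, six-case analysis you imported from Theorem~2.1 is unnecessary here. The paper also reshuffles the thresholds (its $R_a$ starts at $R_3$, its $R_b$ is the high part, and there is no separate $S_3$), but that is bookkeeping; the substantive missing idea is the color-indexed sets $C_i$ and $S'_i$.
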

\begin{proof}
Suppose that $k$ vertices and their neighbors are colored by a
proper $d+1$-coloring in such a way that these $k$ vertices are
dominant of color 1,2,...,$k$, $k\leq d$. Define $C$, $R$ and $R_i$,
$0\leq i\leq d$, as in the  proof of Theorem 2.1. Let $C_i=\{v\in R:
\; v$ has a neighbor of color $i$ in $C\}$, $1\leq i\leq d+1$. Let
$R_a=R_3\cup...\cup R_{\lfloor\frac{d+1}{2}\rfloor}$ and $
R_b=R_{\lfloor\frac{d+1}{2}\rfloor+1}\cup...\cup R_d$.\newline
Dominant vertices has no neighbors in $V (G)\backslash C$ and a
neighbor of a dominant vertex has at most $d-1$ neighbors in $R$, so
we can say that:
\begin{center} $|R|+2|R_a|+\lfloor\frac{d+1}{2}\rfloor|R_b| \leq d^2(d-1) $
\end{center} For $k<d$, we have \begin{center}
 $|C_i|\leq  k(d-1)\leq (d-1)^2\;\;\; 1\leq i\leq d+1$
 \end{center}
 For $k=d$, since only $d-1$ vertices of color $i,\; i\neq d+1$,
 can have neighbors outside $C$ while $d$ vertices of color $d+1$ can have neighbors outside $C$, then
 \begin{center}
 $|C_i|\leq  (d-1)^2 $ for$ \; i\neq d+1$, and $|C_{d+1}|\leq d(d-1)$
 \end{center}
 Let \begin{center}

 $S_1=\{v\notin C\cup R \;:\;|N(v)\cap R_a|\geq\lceil\frac
{d-1}{2}\rceil\}$\end{center}
\begin{center}
$S_2=\{v\notin C\cup R\;:\; |N(v)\cap R_b|\geq 1\}$
\end{center}
\begin{center}
$S'_i=\{v\notin C\cup R\;:\; |N(v)\cap C_i|\geq d-1\}$, $1\leq i\leq
d+1$ such that $i\neq k+1$.
\end{center}
The union of the sets $S'_i$, $1\leq i\leq d+1$ and $i\neq k+1$, is
denoted by $S'$. We have
\begin{center} $\lceil\frac {d-1}{2}\rceil|S_1|\leq (d-3)|R_a|,$ thus
$|S_1|< 2|R_a|$
\end{center}

\begin{center}
  $|S_2|\leq (\lceil\frac
{d-1}{2}\rceil-1)|R_b|$ \end{center}\begin{center}
  $(d-1)|S'_i|\leq (d-1)|C_i|$, thus $|S'_i|\leq (d-1)^2$ \end{center}
  \begin{center} and $|S'|\leq d(d-1)^2$.\end{center} So, \begin{center}
$|C|+|R|+|S_1|+|S_2|+|S'|< |C|+|R|+2|R_a|+(\lceil\frac
{d-1}{2}\rceil-1)|R_b|+ d(d-1)^2$\end{center} But
$\lfloor\frac{d+1}{2}\rfloor|R_b|\leq d^2(d-1)-|R|-2|R_a|$, thus
\begin{center} $|C|+|R|+|S_1|+|S_2|+|S'|<
 d(d+1)+ d^2(d-1)+d(d-1)^2\leq
2d^3+2d-2d^2$
 \end{center}
 Since $v(G)\geq 2d^3+2d-2d^2$, then there exists a vertex $y$ such that $y\notin C\cup
 R\cup S_1\cup S_2\cup S'$. We note that
 \begin{center}
 $|N(y)\cap R_a|\leq\lceil\frac
{d-1}{2}\rceil-1$,  $|N(y)\cap R_b|=0$, $|N(y)\cap C_i|\leq d-2$,
$\forall i\neq k+1~~~~(*)$. \end{center} Let $K$  and $F$ be two
induced subgraphs where $V(K)=C$ and $V(F)=N(y)\cup\{y\}$. Color $y$
and its neighbors by a proper $d$+1-coloring $c$ in such a way that
$y$ is a dominant vertex of color $k$+1 and $\ell(\Delta_c)$ is
maximal . If $\ell(\Delta_c)= d+1$, then $y$ is a dominant vertex in
a proper $d$+1-coloring for $V(K)\cup V(F)$. Else, there exists
$i\neq k+1$, such that $(i,i)\notin E(\Delta_c)$. Let $x$ be the
vertex of color $i$ in $N(y)$.
\newline By $(*)$, we can find at least 2 neighbors of $y$,
say $w_1$ and $w_2$, such that $w_1$ and $w_2$ has no neighbor of
color $i$ in $K$. If $x$ has no neighbor of color $c(w_j)$,
$j\in\{1,2\}$, then $C_1=i\;c(w_j)$ is a circuit in $\Delta_c$.
Otherwise, since $x$ has  at most $\lfloor\frac{d+1}{2}\rfloor$
neighbors  in $K$, then there exists a neighbor
 of $y$, say $w_3$,  such that $x$ has no neighbor of color $c(w_3)$ in $K$ where $c(w_3)\neq k+1$, $w_3\notin \{w_1,w_2\}$ and $w_3$ has at most 2
 neighbors in $K$.  If $w_3$ has no neighbor of color $i$
in $K$, then $C_2=c(w_3)\;i$ is a circuit in $\Delta_c$. Else, $w_3$
has no neighbor in $K$ of color $c(w_k)$ where $k=1$ or $2$. So,
$C_3=c(w_3)\;c(w_k)\;i$ is a circuit in $\Delta_{\zeta}$. In all
cases, there exists a circuit containing $i$, then by Lemma 2.1 we
can find a proper $d+$1-coloring $c'$ of $V(F)$ such that
$\ell(\Delta_{c'})>\ell(\Delta_{c})$, a contradiction. Thus,
$\ell(\Delta_{c})=d+1$ and so $c$ is a proper $d$+1-coloring for
$V(F) \cup V(K)$ and $y$ is a dominant vertex of color $k+1$.
Consequently, we can find a $d+1$ dominant vertices of distinct
colors.
\end{proof}

\section {Matching and $b$-coloring}
  Using matching Cabello and Jacovac proved that $b(G)=d+1$ for any $d$ -regular graph with at least $2d^3 +d-d^2$ vertices.
  Matching also yields another proof for Theorem 2.1. This proof is based on the following
  Lemma:
\begin{lemma}
Let $t$ be a fixed integer. Let $L$ and $V$ be two sets of
cardinality $t$. Let $H$ be a bipartite graph with partition $V$ and
$L$ such that for every $v \in V$ and every $u \in L$,
 $d_H (v)+d_H (u)\geq t$.
Then $H$ has a perfect matching.
\end{lemma}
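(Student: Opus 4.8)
The plan is to deduce the perfect matching from Hall's marriage theorem. Since $|V|=|L|=t$, any matching that saturates $V$ is automatically perfect, so it suffices to verify Hall's condition on the side $V$: for every $S\subseteq V$ one has $|N_H(S)|\geq|S|$, where $N_H(S)$ denotes the set of neighbours of $S$ in $L$.

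So I would fix a nonempty $S\subseteq V$ (the case $S=\emptyset$ being trivial) and split into two cases according to whether $N_H(S)$ exhausts $L$. If $N_H(S)=L$, then $|N_H(S)|=t\geq|S|$ and there is nothing more to do. Otherwise there is some $u\in L\setminus N_H(S)$; by definition of $N_H(S)$ this $u$ has no neighbour in $S$, so all of its neighbours lie in $V\setminus S$, whence $d_H(u)\leq t-|S|$. Now pick any $v\in S$ and apply the degree hypothesis to the pair $(v,u)$: we get $d_H(v)\geq t-d_H(u)\geq t-(t-|S|)=|S|$. Since $v\in S$, every neighbour of $v$ belongs to $N_H(S)$, so $|N_H(S)|\geq d_H(v)\geq|S|$, which is exactly Hall's condition.

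With Hall's condition established, Hall's theorem yields a matching saturating $V$, and by the cardinality equality this matching is perfect, completing the proof.

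I do not expect a genuine obstacle here: the whole content is choosing the right case split (does $S$ see all of $L$ or not?) and observing that an uncovered vertex on the $L$-side must have small degree, after which the hypothesis $d_H(v)+d_H(u)\geq t$ immediately forces $v$ to have enough neighbours. The only thing to be careful about is handling the degenerate cases ($S$ empty, or $t=0$) and noting that the argument uses the degree inequality only for a single well-chosen pair, which is permitted since it is assumed for all pairs.
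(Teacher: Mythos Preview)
Your proof is correct. You verify Hall's condition on the side $V$ by a neat case split: either $N_H(S)=L$ (trivial), or some $u\in L$ misses $S$, whence $d_H(u)\leq t-|S|$ and the degree hypothesis forces any $v\in S$ to have $d_H(v)\geq |S|$ neighbours, all lying in $N_H(S)$. This is a valid and clean route.

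The paper takes a different, more self-contained approach that avoids Hall's theorem entirely. It argues by contradiction: take a maximum matching $M$ that is not perfect, pick unmatched vertices $u\in L$ and $v\in V$, and observe that all neighbours of $u$ and of $v$ must be $M$-saturated (else $M$ could be extended). A pigeonhole count on the at most $t-1$ edges of $M$, combined with $d_H(u)+d_H(v)\geq t$, yields an edge $ab\in M$ with $a\in N_H(u)$ and $b\in N_H(v)$; swapping $ab$ for $\{au,bv\}$ gives a larger matching, a contradiction. In effect the paper exhibits an augmenting path of length three directly. Your argument is shorter once Hall is granted, while the paper's argument is elementary in that it needs no external matching theorem; both exploit the degree condition at a single well-chosen pair.
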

\vspace{3mm}

\begin{proof} The proof is by contradiction. Let $M$ be the maximum matching such that $M$ is not perfect. Then, there exist at least
two vertices, say  $u\in L$ and $v\in V$, outside $M$. Since $d_H
(v)+d_H (u)\geq t$, then there exists an edge in $M$, say $ab$, such
that $a \in N_H(u)$ and $b\in N_H(v)$. Then let $M'$ be the set of
edges such that $M'=( M\backslash\{ab\})\cup\{au, bv\}$. It is clear
that $M'$ is a matching with $|M'|>|M|$, a contradiction.\end{proof}
\emph{ Another
Proof of Theorem 2.1}.\\
 We have a partial $b$ coloring of the graph
$G$, the set of $k$ dominant vertices of the colors $1,2,..,k$ and
their neighbors. $C$, $R$ and $R_i$, $0\leq i\leq d$, are defined as
in the previous proof. Let $C_i=\{v\in C: \; v$
is of color $i$  $\}$, $1\leq i\leq d+1$.\\
Let $R_a=R_1 \cup R_2 \cup ...\cup
R_{\lfloor\frac{d+1}{2}\rfloor}$,\;$R_b=R_3 \cup R_4 \cup
 ...\cup R_{\lfloor\frac{d+1}{2}\rfloor}$, and $R_c=R_{\lfloor\frac{d+1}{2}\rfloor+1}\cup...\cup R_d$. \\Dominant vertices has no neighbors in $V (G)\backslash C$
  and a neighbor of a dominant vertex has at most d - 1 neighbors in $R$, so we can say that:
  \begin{center}$|R| +|R_2| + 2|R_b|+ {\lfloor\frac{d+1}{2}\rfloor}|R_c| \leq d^2(d - 1)
  ~~~~(a)$\end{center}
For each $y\in G-(C\cup R)$, let us set:\\
$R_c(v)= N(v)\cap R_c$,\\
 $R_b(v)=N(v)\cap R_b$,\\
 $R'_b(v)=N(v)\cap (R_0 \cup R_1\cup R_2)$,\\
 $S_1= \{ v \notin (C\cup R),|R_b(v)| \geq \lceil \frac{d-1}{2}\rceil \}$.\\
$S_2 = \{ v \notin(C\cup R),|R_c(v)|\geq 1 \}$.\\
Then, we have
\begin{center}
$\lceil \frac{d-1}{2}\rceil |S_1|\leq (d-3) |R_b|,$ then $|S_1|\leq
2|R_b|$
\end{center}
\begin{center}
and $|S_2|\leq (\lceil \frac{d-1}{2}\rceil-1)|R_c|$
\end{center}

Let $S_0=\{y \in G\backslash(C\cup R \cup S_1\cup S_2):\; N(v)\cap
(R_2\cup...\cup R_{\lfloor\frac{d+1}{2}\rfloor+1})\geq d-2\}$  and
$S_{0,i}=\{v\in S_0: R_b(v)=i\} $, $0 \leq i \leq \lceil
\frac{d-1}{2}\rceil-1$. So, $S_0= \cup_{0\leq i \leq \lceil
\frac{d-1}{2}\rceil-1} S_{0,i}.$ \\We have: \begin{center} $
\sum_{0\leq i \leq \lceil \frac{d-1}{2}\rceil-1} i.|S_{0,i}|+\lceil
\frac{d-1}{2}\rceil|S_1| \leq (d-3).|R_b| ~~~~(1)$ \end{center}
\begin{center}
$\sum_{0 \leq i \leq (\lceil \frac{d-1}{2}\rceil-1)}(d-2-
i).|S_{0,i}|)\leq (d-2).|R_2|  ~~~~(2)$ \end{center} From these 2
last inequalities, we deduce that\
$$(d-2).|S_0| +2\lceil \frac{d-1}{2}\rceil|S_1|\leq 2(d-3)|R_b|+ (d-2)|R_2|$$
Thus, we get \begin{center} $|S_0| +|S_1|< 2.|R_b|+|R_2|~~~~(b)$
\end{center}

Thus, by $(a)$ and $(b)$, we get: \begin{center}
$|C|+|R|+|S_0|+|S_1|+|S_2|<|C|+|R|+2|R_b|+|R_2|+(\lceil
\frac{d-1}{2}\rceil-1)|R_c|\leq d^2(d-1)+d(d+1)\leq d^3+d$
.\end{center} Since, $|V(G)|\geq(d^3+d)$, then we can find a vertex
$y$ such that $y \in G-(C\cup R \cup S_0\cup S_1\cup S_2)$. Color
$y$ by $k+1$. Now, we color separately $R_b(y)$ and $R'_b(y)$. Let
$B$ be a subset of $N(y)$. For any color $j$, let $e(C_j,B)$ be the
number of edges with one extremity in $C_j$ and the other one in
$B$. We remark that for any color $j$\begin{center} $ e(C_j ,N(y))
\leq d - 1 ~~~~(*)$
\end{center}

By definition of $y$,
 $|R_c(y)|=0,\; |R_b(y)|\leq \lceil \frac{d-1}{2}\rceil-1  $ and
$|N(y)\cap (R-R_1 )|\leq (d-3)$, so \begin{center}$|N(y)\cap (R_0
\cup R_1 )| \geq 3 ~~~~(**)$\end{center} Let us note that \
$$e(C,R'_b(y))=|N(y)\cap R_1|+ 2.|N(y)\cap R_2|$$
Let $1$ be a color such that $e(C_1,R'_b(y))$ is maximum.\\

\rm (1) If $e(C_1,R'_b(y))=|R'_b(y) \setminus  R_0|$, by (*) we can
choose one vertex $w_1 \notin R'_b(y)$ not neighbor of $c_1$ and we
color it by $c_1$.

 We have:
 $e(C\backslash C_1,R'_b(y))=|R'_b(y)\cap R_2 | \leq |R'_b(y)|-3$ by inequality (**).
\vspace{2mm}

(2) If $e(C_1,R'_b(y))=|R'_b(y) \backslash R_0|-1$, we choose one
vertex $w_1 \in R'_b(y)$ not neighbor of $c_1$ and we color it by
$c_1$. \vspace{2mm}
 Now, by (**),
$e(C-C_1,R'_b(y)-{w_1})=|R'_b(y)\cap R_2| \leq
|R'_b(y)\backslash\{w_1\}|-2$ 

(3) If (1) and (2) are excluded, for any color j, $e(C_j,R'_b(y))
\leq |R'_b(y) \setminus R_0|-2.$

\rm \vspace{2mm}
 Now, we are going to color $R_b(y)\cup R'_b(y)\setminus\{w_1\}$ using colors in $L$, where $L=\{1,2,..,d+1 \}- \{1,(k+1)\}
 \}$.
By definition of $R_b(y)$, each vertex $u$  of $R_b(y)$ is adjacent
to at most  $ \lceil \frac{d-1}{2}\rceil-1$ colored vertices in $C$
, so $u$ is colorable by at least $\lfloor \frac{d+1}{2}\rfloor$
colors of $L$ in a proper coloring. Thus, we can color easily  the
vertices of $R_b(y)$ by colors of $L$ such that 2 by 2 they get
different colors. \vspace{2mm}

There remains a set $L' \subset \{1,..,d \}$ (or $\{2,...,d\}$ ) of
$|R'_b(y) \setminus w_1\}|$  colors not used yet.

\rm For each remaining color $j$, we have\
 $e(C_j,R'_b(y)\backslash\{w_1\})\leq |R'_b(y)\setminus \{w_1\}|-2$, and
for each $u \in R'_b(y)$, $u$ has at most two colored neighbors in
$C$. Let $H$ be the bipartite graph with bipartition $L'$ and
$V=R'_b(y) \setminus \{w_1\}$ such that $uj$ is an edge in $H$
whenever $u$ has no neighbor of color $j$ in $G$, where $u\in V$ and
$j\in L'$. Let $ t= |R'_b(y)\setminus\{w_1\}|$. For each $u \in V$,
$d_H(u) \geq t-2$, for each $j\in L'$, $d_H(j)\geq 2$. Thus, by
lemma 3.1, there exists a perfect matching in $H$. Now, if $uj$ is
an edge in the matching then color $u$ by $j$. Finally, we get a
dominant vertex
$y$ for the color $k+1$. $\Box$\\
\newpage

\end{document}